\documentclass[11pt]{amsart}

\theoremstyle{plain}

\newtheorem{theorem}{Theorem}
\newtheorem{lemma}{Lemma}
\newtheorem{corollary}{Corollary}

\theoremstyle{remark}
\newtheorem*{remark}{Remark}

\newtheorem*{question}{Question}
\newtheorem{definition}{Definiton}

\usepackage{amssymb}
\usepackage{amsthm}

\parskip=5pt

\title[Milnor-Wood inequality]{A Milnor--Wood inequality for complex hyperbolic lattices in quaternionic space}
\author[Garc\'ia Prada]{Oscar Garc\'ia-Prada}
\address{ Instituto de Ciencias Matem\'aticas CSIC-UAM-UC3M-UCM \\
Serrano 121\\ 28006 Madrid, Spain}
\email{oscar.garcia-prada@icmat.es} 

\author[Toledo] {Domingo Toledo}
\address{ Department of Mathematics \\
University of Utah\\
Salt Lake City, UT 84112 }
\email{toledo@math.utah.edu}
\date{\today} 

\thanks{First author partially supported by Ministerio de Ciencia e Innovaci\'on (Spain) Grant MTM2007-67623. Second author supported by National Science 
Foundation Grant DMS-0600816.}

\begin{document}
\maketitle

\section{Introduction}

The purpose of this paper is to prove a Milnor-Wood inequality for representations of the fundamental group of a compact complex hyperbolic manifold in the group of isometries of quaternionic hyperbolic space.  Of special interest is the case of equality, and its application to rigidity.  We show that equality can only be achieved for totally geodesic representations, thereby establishing a global rigidity theorem for totally geodesic representations.   We point out the recent papers of Kim-Pansu-Klingler \cite{KPK} and Klingler \cite{Kl} where closely related local rigidity theorems for totally geodesic representations  are proved.

To explain our results, let $m\ge 2$ and let  $\Gamma\subset SU(m,1)$ be a co-compact,  torsion-free lattice.  Then $X = \Gamma\backslash B^m$ is a compact complex manifold covered by the unit ball $B^m\subset\mathbb{C}^m$.   (We will write either $B^m$ or $H_\mathbb{C}^m$ for the complex hyperbolic $m$-space, which is the symmetric space of $SU(m,1)$.)  The space of invariant two forms on $B^m$ is one-dimensional.  Choose a generator $\omega = \omega_{B^m}$ to represent the Chern class of the tautological line sub-bundle.   This is a K\"ahler form for $B^m$ and descends to a K\"ahler form $\omega_X$ on $X$ .  Let 

$$
v(X) = \int_X\omega_X^m, 
$$  
Then $v(X)$ is a positive integer, proportional to the volume of $X$.   In terms of characteristic numbers and other invariants of $X$, $v(X) = (-1)^m\mathcal{T}(X)$ where $\mathcal{T}(X)$ is the Todd genus of $X$, in other words,  by the  Hirzebruch-Riemann-Roch theorem, $v(X) = (-1)^m  \chi(X,\mathcal{O}_X)$.

 Let $H^n_\mathbb{H}$ be the quaternionic hyperbolic space of quaternionic dimension $n$, with group of isometries $Sp(n,1)$.  The space of invariant four-forms on  $H^n_\mathbb{H}$ is one-dimensional.  We choose a non-zero form $\alpha$ in this space, and normalize it by requiring that  its restriction to a totally geodesic complex hyperbolic subspace $H^m_\mathbb{C} = B^m$ is  the form $\omega_{B^m}^2$, where $\omega_{B^m}$ is as above.

  Let $\rho:\Gamma\to Sp(n,1)$ be a representation.  Using the above definitions, we can assign to it a characteristic number $c(\rho)$ as follows.  Let $f:B^m \to H^n_\mathbb{H}$ be a smooth equivariant map (which exists and is unique up to equivariant homotopy).   Then $f^*\alpha$ is an invariant form on $B^m$, so it descends to a form on $X$.  
  We then define  $c(\rho)$ by
$$
c(\rho) = \int_X\ \omega_X^{m-2}\wedge  f^*\alpha \ .
$$
Our main result is the following theorem:

\begin{theorem}
\label{thm-main}
\begin{enumerate}
\item $|c(\rho)| \le v (X)$.
\item If equality holds, $\rho$ is a totally geodesic representation.
\end{enumerate}

\end{theorem}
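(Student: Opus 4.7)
The plan is to reduce the integral inequality to a sharp pointwise algebraic bound on the differential of a harmonic equivariant map, and to handle the equality case by combining the infinitesimal rigidity encoded in that bound with a harmonicity/Bochner argument.

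\emph{Step 1 (Harmonic representative and description of $\alpha$).} Since $\alpha$ is closed and $c(\rho)$ depends on $f$ only through its equivariant homotopy class (by Stokes), Corlette's theorem lets us take $f:B^m \to H^n_\mathbb{H}$ to be $\rho$-equivariant and harmonic. Up to a scalar, $\alpha$ is the Kraines 4-form of the quaternion-K\"ahler structure on $H^n_\mathbb{H}$: for a local triple $(I,J,K)$ spanning the $Sp(1)$-factor of the holonomy, $\alpha$ is proportional to $\omega_I^2 + \omega_J^2 + \omega_K^2$. The normalization in the excerpt fixes this scalar: on a totally geodesic $H^m_\mathbb{C}\subset H^n_\mathbb{H}$ one structure (say $I$) preserves the tangent space, so that $\omega_I^2|_{H^m_\mathbb{C}}=\omega_{B^m}^2$, while $J,K$ swap tangent with normal directions, so that $\omega_J^2$ and $\omega_K^2$ vanish upon restriction.

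\emph{Step 2 (Pointwise Wirtinger-type inequality).} The core technical step is a pointwise bound: for every $p \in B^m$ and every $\mathbb{R}$-linear map $L : T_p B^m \to T_{f(p)} H^n_\mathbb{H}$,
\[
\bigl|L^*\alpha \wedge \omega_{B^m}^{m-2}\bigr|(p) \;\le\; \omega_{B^m}^m(p),
\]
with equality if and only if $L$ is a $\mathbb{C}$-linear isometric embedding, for some choice of $I$ in the $Sp(1)$-sphere at $f(p)$, onto the tangent space of a totally geodesic $H^m_\mathbb{C}\subset H^n_\mathbb{H}$. I would prove this algebraic bound by fixing a quaternionic orthonormal frame at $f(p)$, decomposing $L$ into its $(J_0,I)$-complex-linear and anti-linear parts (with $J_0$ the domain complex structure), expressing $L^*\omega_I$, $L^*\omega_J$, $L^*\omega_K$ in terms of these parts, and reducing the bound to a Cauchy--Schwarz-type estimate on the resulting Hermitian pairings.

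\emph{Step 3 (Integration and rigidity).} Applying the pointwise bound to $L=df_p$ and integrating over $X$ yields part (1). If $|c(\rho)| = v(X)$, then equality must hold pointwise almost everywhere, and hence everywhere by continuity; so at each $p$ the differential $df_p$ is a $\mathbb{C}$-linear isometric embedding onto a complex tangent subspace $V_{f(p)} \subset T_{f(p)} H^n_\mathbb{H}$ corresponding to a totally geodesic $H^m_\mathbb{C}$. Harmonicity of $f$ then forces the pointwise-varying distribution $\{V_{f(p)}\}$ to be integrable and tangent to a single totally geodesic $H^m_\mathbb{C}\subset H^n_\mathbb{H}$; this is the content of a second-fundamental-form/Bochner argument (in the spirit of Sampson--Siu) exploiting the negative quaternion-K\"ahler curvature of the target. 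Consequently $f(B^m)$ lies in this fixed $H^m_\mathbb{C}$, $\rho(\Gamma)$ preserves it, and $\rho$ is a totally geodesic representation, proving part (2).

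\emph{Main obstacle.} The hardest piece is the sharp pointwise Wirtinger-type inequality together with its equality characterization, which requires a delicate linear-algebra analysis of how a real-linear map $\mathbb{C}^m \to \mathbb{H}^n$ interacts with the $Sp(1)$-sphere of complex structures on the target. A second, more structural, subtlety is the rigidity step: promoting the pointwise (pointwise-varying) equality condition to the global statement that $f$ factors through a single totally geodesic $H^m_\mathbb{C}$, where negative curvature of the target and harmonicity of $f$ both play essential roles.
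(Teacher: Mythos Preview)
There is a genuine gap in Step~2. The pointwise inequality $|L^*\alpha \wedge \omega_{B^m}^{m-2}| \le \omega_{B^m}^m$ cannot hold for \emph{every} $\mathbb{R}$-linear map $L$: if $L$ is $\lambda$ times the differential of the totally geodesic inclusion $H^m_\mathbb{C}\hookrightarrow H^n_\mathbb{H}$, then $L^*\alpha = \lambda^4\,\omega_{B^m}^2$ and the left side equals $\lambda^4\,\omega_{B^m}^m$, violating the bound for $\lambda>1$. A Wirtinger-type estimate can only compare $L^*\alpha$ to something involving the size of $L$ (its operator norm or energy density), not to the fixed domain volume form. You therefore need an independent mechanism to control $\|df\|$ pointwise, and harmonicity alone does not provide one: harmonic maps into nonpositively curved targets are not in general distance-decreasing, and the Sampson/Bochner machinery you invoke in Step~3 controls integrated quantities or pluriharmonicity, not a pointwise Lipschitz bound. (A smaller omission: Corlette's theorem requires $\rho$ reductive, which must be argued separately when $c(\rho)\ne 0$.)

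The paper closes exactly this gap by passing to the twistor space $D^n\to H^n_\mathbb{H}$. When $c(\rho)\ne 0$ the harmonic map $f$ has rank $\ge 4$ and, by a theorem of Carlson--Toledo, lifts to an equivariant \emph{horizontal holomorphic} map $F:B^m\to D^n$. On $D^n$ there is an invariant pseudo-K\"ahler form $\omega_{D^n}$ with $\pi^*\alpha$ cohomologous to $\omega_{D^n}^2$, so $c(\rho)=\int_X F^*\omega_{D^n}^2\wedge\omega_X^{m-2}$. Now the Ahlfors--Schwarz lemma for horizontal holomorphic maps (horizontal holomorphic sectional curvature $\equiv -1$ on the target) gives the genuine pointwise bound $F^*\omega_{D^n}\le\omega_{B^m}$, with equality precisely for horizontal totally geodesic holomorphic embeddings; this yields both the inequality and the rigidity in one stroke. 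The holomorphic twistor lift is thus not a technical detour but the essential device that converts the problem into one where a sharp pointwise Schwarz lemma actually holds.
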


By a {\em totally geodesic representation} we mean that there is a totally geodesic $H^m_\mathbb{C}\subset H^n_\mathbb{H}$ so that the image of the representation lies in the subgroup $G\subset Sp(n,1)$ that preserves this $H^m_\mathbb{C}$ and that the equivariant map $f$, which can be assumed to have image in this $H^m_\mathbb{C}$, is totally geodesic isometric embedding.

In group-theoretic terms, note that the subgroup $G$ of $Sp(n,1)$ that leaves $B^m\subset H^n_\mathbb{H}$ invariant is of the form $G_1\times G_2$ where $G_1$ is isomorphic to $SU(m,1)$ and $G_2$ is isomorphic to the compact group $U(1)\times Sp(n-m)$, the centralizer of $G_1$ in $Sp(n,1)$.    To say that $\rho$ is a totally geodesic representation is the same as saying that the image of $\rho$ lies in such a subgroup $G_1\times G_2$, and, splitting $\rho = \rho_1\times \rho_2$,  that $\rho_1$ is conjugate to the inclusion $\Gamma\subset SU(m,1)$.  In particular, a totally geodesic representation is faithful.

Since the characteristic number $c(\rho)$  is a topological invariant, therefore invariant under deformation, we obtain the following local rigidity theorem:

\begin{corollary}
Let $\rho:\Gamma\to Sp(n,1)$ be a totally geodesic representation.  Then any deformation $\rho_t$ of $\rho$ is a totally geodesic representation.  
\end{corollary}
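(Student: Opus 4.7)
The plan is to show that the characteristic number $c(\rho)$ achieves the maximum value $v(X)$ precisely when $\rho$ is totally geodesic, and then to use the topological invariance of $c$ under deformation together with Theorem~\ref{thm-main}(2). The argument splits into three steps.

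First I would verify that for a totally geodesic representation $\rho$ one has $c(\rho) = v(X)$. By definition of totally geodesic, we may choose the equivariant map $f:B^m \to H^n_\mathbb{H}$ to be a totally geodesic isometric embedding onto the invariant $H^m_\mathbb{C} \subset H^n_\mathbb{H}$, and by hypothesis $\rho_1$ is conjugate to the inclusion $\Gamma \subset SU(m,1)$. By the chosen normalization of $\alpha$ (its restriction to a totally geodesic $H^m_\mathbb{C}$ equals $\omega_{B^m}^2$), we get $f^*\alpha = \omega_{B^m}^2$, which descends to $\omega_X^2$ on $X$. Therefore
\[
c(\rho) \;=\; \int_X \omega_X^{m-2}\wedge f^*\alpha \;=\; \int_X \omega_X^m \;=\; v(X).
\]

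Second, I would argue that $c$ is constant along any continuous family $\rho_t$. The invariant four-form $\alpha$ on the symmetric space $H^n_\mathbb{H}$ is closed, so $f_t^*\alpha$ is a closed form on $X$. A continuous family of equivariant maps $f_t$ can be produced from $\rho_t$ (using contractibility of $H^n_\mathbb{H}$ and equivariant straight-line homotopies in geodesic normal coordinates, or by averaging over a fundamental domain), and any two such families differ by an equivariant homotopy. Hence the de~Rham class $[f_t^*\alpha] \in H^4(X;\mathbb{R})$ is independent of $t$, and so is its pairing with $[\omega_X^{m-2}]$. Thus $c(\rho_t)$ is locally, hence globally, constant in $t$.

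Combining the two steps, along any deformation $\rho_t$ of a totally geodesic $\rho_0$ we have $c(\rho_t) = c(\rho_0) = v(X)$, so equality holds in Theorem~\ref{thm-main}(1) for every $t$, and part~(2) of that theorem forces $\rho_t$ to be totally geodesic. The one subtle point, and the main technical obstacle if one wishes to write this carefully, is producing the continuous family $f_t$ and confirming that $[f_t^*\alpha]$ genuinely varies continuously in $H^4(X;\mathbb{R})$; this is standard given that the target is a contractible homogeneous space, but it is the only place where one has to do more than quote Theorem~\ref{thm-main}.
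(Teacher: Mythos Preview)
Your proposal is correct and follows exactly the paper's approach: the paper simply remarks, immediately before stating the corollary, that $c(\rho)$ is a topological invariant and hence invariant under deformation, leaving the reader to combine this with Theorem~\ref{thm-main}. You have spelled out the two ingredients the paper leaves implicit---that the normalization of $\alpha$ forces $c(\rho)=v(X)$ for a totally geodesic $\rho$, and why $c$ is constant along a continuous family---so your write-up is a faithful expansion of the paper's one-line argument.
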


For convenience, we have stated our results in terms of the groups $SU(m,1)$ and $Sp(n,1)$ that do not act effectively on the corresponding symmetric spaces.   We could easily modify our statements in terms of the quotient groups that act effectively.

\begin{question}  It is natural to ask for examples of representations $\rho$ where $0<c(\rho) < v(X)$.  In the case $m=2$ we can construct such examples by taking two ball quotients $X$, $Y$ with a surjective holomorphic map $f:X\to Y$ with $0< deg(f) < \frac{v(X)} {v(Y)} $ (these exist, see \cite{To2}), and a totally geodesic representation $\rho': \pi_1(Y)\to Sp(n,1)$.  Then the composition $\rho'\circ f_*:\pi_(X)\to Sp(n,1)$ satisfies the desired inequality.    But these representations do not give satisfactory examples, because, even though the representation is not totally geodesic,  its image still leaves invariant a totally geodesic  $H_\mathbb{C}^2\subset H_\mathbb{H}^n$.   Thus the real question is:  are there representations that satisfy $0 < c(\rho) < v(X)$ {\em and} whose image does not leave any geodesic $H_\mathbb{C}^m \subset H_\mathbb{H}^n$ invariant. 
\end{question}

We recall that the classical Milnor-Wood inequality concerns representations of surface groups in $SL(2,\mathbb{R})$ \cite{Mi}  or $Top(S^1)$  \cite{Wo}.   It has been generalized  to representations of surface groups in groups of isometries of Hermitian symmetric spaces, and the case of equality has received much attention 
\cite{BGG:2003,BGG:2005,BIW,GGM:2008}.  It is harder to prove such inequalities for representations of fundamental groups of higher dimensional manifolds.   For representations of fundamental groups of our $\Gamma$'s into $SU(n,1)$ there is the rigidity result of Corlette \cite{Co}, on which our result is modeled.    For some recent work on higher dimensional domains, see \cite{BCG, BG, KM1, KM2},  and references in these papers.

\begin{question}
A very natural question is whether there is a Milnor-Wood inequality for representations of lattices in $Sp(1,1)$, (equivalently,  represenations of fundamental groups of constant curvature four-manifolds), in $Sp(n,1), \ n>1$.   It is conjectured that, in analogy with  \cite{To1},  a sharp inequality exists, and that the case of equality characterizes the totally geodesic representations.  A suggestive local rigidity result compatible with this conjecture has been proved in \cite{KP}.
\end{question}

\section{Quaternionic Hyperbolic Space and its Twistor Space}

Let us write $\mathbb{H}$ for the quaternions and $\mathbb{H}^{n+1}$ for a right - quaternionic vector space of quaternionic dimension $n+1$.   We will use the quaternionic hermitian forms 
\begin{equation*}
h_0^\mathbb{H}(X,Y) =  \bar x_1 y_1 + \dots + \bar x_n y_n + \bar x_{n+1} y_{n+1} 
\end{equation*}
 and  
\begin{equation*}
h^\mathbb{H} (X,Y) =  \bar x_1 y_1 + \dots +\bar x_n  y_n -  \bar x_{n+1} y_{n+1} 
\end{equation*} 
on $\mathbb{H}^{n+1}$, where $X = (x_1,\dots,x_{n+1}),\  Y = (y_1\dots y_{n+1})\in \mathbb{H}^{n+1}$.    The quaternionic projective space $\mathbb{H}P^n$ is the manifold of right quaternionic lines in $\mathbb{H}^{n+1}$.   The group $Sp(n+1)$ of right quaternionic linear isometries of $h_0^\mathbb{H}$ acts transitively on it (by left multiplication by suitable quaternionic matrices) and the line $(0,\dots,0,1)$ has isotropy group $Sp(n)\times Sp(1)$, thus $\mathbb{H}P^n = Sp(n+1)/Sp(n)\times Sp(1)$.

The quaternionic hyperbolic space $H^n_\mathbb{H}$ is the open subset of $\mathbb{H}P^n$ consisting of those right-quaternionic lines $L$ on which the form $h^\mathbb{H}$ is negative: $h^\mathbb{H}(X,X)<0$ for all $X\in L$.  It is a homogeneous space for the group $Sp(n,1)$ of right quaternionic linear maps that preserve the form $h^\mathbb{H}$, and the isotropy group of the line $(0,\dots,0,1)$ is again $Sp(n)\times Sp(1)$.  We will write  $H_\mathbb{H}^n$ for the quaternionic hyperbolic space.   Thus $H_\mathbb{H}^n = Sp(n,1)/Sp(n)\times Sp(1)$.

The algebra of $Sp(n,1)$-invariant differential forms on $H_\mathbb{H}^n$ is isomorphic to the algebra of $Sp(n+1)$-invariant differential forms on $\mathbb{H}P^n$, thus to the cohomology of $\mathbb{H}P^n$, which is a truncated polynomial algebra on a single generator $\alpha$ of dimension $4$.  Our normalization for $\alpha$ makes it correspond to an integral generator for the cohomology of $\mathbb{H}P^n$ with suitable positivity properties.

We have an isomorphism $\mathbb{C}^{2n+2}\cong \mathbb{H}^{n+1}$ obtained by letting $i\in\mathbb{C}$ act on $\mathbb{H}^{n+1}$ by right multiplication by the quaternion $i$.  Explicitly, the complex coordinates $z_1,\dots , z_{n_1},w_1,\dots ,w_{n+1}$ on $\mathbb{C}^{2n+2}$ are related to the quaternionic coordinates $x_1,\dots x_{n+1}$ by $x_l = z_l + j w_l$ for $l = 1,\dots n+1$.   In terms of these complex coordinates the quaternionic Hermitian forms decompose as 
\begin{equation*}
 h_0^\mathbb{H}(X,Y) = h_0^\mathbb{C}(X,Y) + j a_0^\mathbb{C}(X,Y)
 \end{equation*}
 and
 \begin{equation*}
  h^\mathbb{H}(X,Y) = h^\mathbb{C}(X,Y) + j a^\mathbb{C}(X,Y),
 \end{equation*}
where $h_0^\mathbb{C}$ and $h^\mathbb{C}$ are complex Hermitian forms (conjugate linear in the first argument, complex linear in the second) and $a_0^\mathbb{C}, a^\mathbb{C}$ are complex bilinear alternating forms, in fact, complex symplectic forms, on $\mathbb{C}^{2n+2}$.   Explicitly, in terms of $x_l = z_l + jw_l$ and $y_l = u_l + jv_l$, we have
\begin{equation}
\label{eq-forms}
h_0^\mathbb{C}(X,Y) = \sum_1^{n+1}(\bar z_l  u_l + \bar w_l  v_l)\ \ \hbox{and} \ \  a_0^\mathbb{C}(X,Y) = \sum_1^{n+1} (z_lv_l - w_lu_l).
\end{equation}
The forms $h^\mathbb{C}(X,Y)$ and $a^\mathbb{C}(X,Y)$ are obtained by changing the sign of the $(n+1)$-st summand in the above formulas to the opposite sign, so 
\begin{equation}
\label{eq-herm-form}
h^\mathbb{C}(X,Y) = \sum_1^{n}(\bar z_l  u_l + \bar w_l  v_l) - (\bar z_{n+1}  u_{n+1} + \bar w_{n+1} v_{n+1})
\end{equation}
and
\begin{equation}
\label{eq-sympl-form}
a^\mathbb{C}(X,Y) = \sum_1^{n} (z_lv_l - w_lu_l) - (z_{n+1}v_{n+1} - w_{n+1}u_{n+1}).
\end{equation}

We obtain a natural map $\pi:\mathbb{C}P^{2n+1}\to \mathbb{H}P^n$ by assigning to a complex line $l$ the right quaternionic line $L$ it generates.  This map is a fibration with fiber the complex projective line $\mathbb{C}P^1$.  We let $D^n = \pi^{-1}(H_\mathbb{H}^n)$.   This is the open subset of $\mathbb{C}P^{2n+1}$ consisting of complex lines in $\mathbb{C}^{2n+2}\cong\mathbb{H}^{n+1}$ on which the form $h^\mathbb{C}$ is negative.  This inclusion gives $D^n$ the structure of a complex manifold, that fibers over the real manifold $H_\mathbb{H}^n$ with fiber $\mathbb{C}P^1$, and is called the {\em twistor space} of $H_\mathbb{H}^n$.   These fibers are complex submanifolds of $D^n$.    A complex line $l\in\mathbb{C}P^{2n+1}$ on the fiber over a quaternionic line $L\in H_\mathbb{H}^n$ (thus $l\subset L$) is the same as a right $\mathbb{H}$-linear and $h^\mathbb{C}$ isometric complex structure $J_L$ on $L$, namely the complex structure that is right multiplication by $i$ on $l$
  and right multiplication by $-i$ on the $h^\mathbb{C}$-orthogonal complement of $l$. 

As homogenous spaces we have $D^n = Sp(n,1)/Sp(n)\times U(1)$ while 
$$\mathbb{C}P^{2n+1} = Sp(n+1)/Sp(n)\times U(1).$$
 We have the commutative diagram:

$$
\begin{array}{rcl}
D^n & \subset & \mathbb{C}P^{2n+1} \\
 \pi\Big\downarrow & & \Big\downarrow\pi \\
H_\mathbb{H}^n & \subset & \mathbb{H}P^n
\end{array}
$$

The {\em vertical bundle} $\mathcal{V} = ker(d\pi)\subset TD^n$ tangent to the fibers of the projection is a $C^\infty$ sub-bundle of $TD^n$. It has a unique $Sp(n,1)$-invariant complement $\mathcal{H}$, called the {\em horizontal bundle}, which is a holomorphic sub-bundle of $TD^n$.  In terms of the canonical description of $T\mathbb{C}P^{2n+1}$ as $Hom(S, \mathbb{C}P^{2n_1}\times\mathbb{C}^{2n+2}/S)$, where $S$ is the tautological line sub-bundle of the trivial bundle $\mathbb{C}P^{2n+1}\times\mathbb{C}^{2n+2}$,  the sub-bundle $\mathcal{H}$ is the restriction to $D^n$ of  $Hom(S,S^\bot)$, where $S^\bot$ is the orthogonal complement of $S$ with respect to the complex symplectic form $a^\mathbb{C}$ of  (\ref{eq-sympl-form}).   Note that $Hom(S,S^\bot)$ is a holomorphic contact structure on $\mathbb{C}P^{2n+1}$, invariant under the group $Sp(a^\mathbb{C})$.

\begin{definition}
\label{def-integral-element}
Given $x\in D^n$, a linear subspace $W\subset \mathcal{H}_x$ is called an {\em integral element} if for all $X,Y\in W$, the vertical component $[X,Y]^\mathcal{V} = 0$.   
\end{definition}

Observe that this definition makes sense because, given any two vector fields $X,Y$ in $\mathcal{H}$, the value of $[X,Y]$ at $x$ depends only on the values of $X$ and $Y$ at $x$.  The geometric meaning of integral element is that, if $M\subset D^n$ is a horizontal submanifold, in other words, an integral submanifold of $\mathcal{H}$, and $x\in M$, then $T_xM\subset \mathcal{H}_x$ is an integral element.

By a {\em pseudo-Hermitian metric} on a complex manifold $M$ with complex structure $J$ we mean a non-degenerate inner product (not necessarily positive definite) on each tangent space that is invariant under $J$.   By a {\em pseudo-K\"ahler metric} we mean a pseudo-Hermitian metric whose associated $(1,1)$ form is closed.

\begin{lemma}
\label{lem-pseudo-metric}
\begin{enumerate}
\item The space $D^n$ has an indefinite pseudo-K\"ahler metric $g$ which is $Sp(n,1)$-invariant and which is negative definite on $\mathcal{V}$, is positive definite on  $\mathcal{H}$, and $\mathcal{V}$ and $\mathcal{H}$ are $g$-orthogonal.

\item  Let $\omega_{D^n}$ be the $(1,1)$ form associated to $g$.  Then  $\pi^*\alpha= \omega^2_{D^n} + d\beta$ for some $Sp(n,1)$-invariant $3$-form $\beta$.
\end{enumerate}
\end{lemma}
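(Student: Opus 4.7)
The plan for part (1) is to construct $\omega_{D^n}$ directly rather than starting from the metric. Using the inclusion $D^n \subset \mathbb{C}P^{2n+1}$, I would set
$$\omega_{D^n} \;=\; i\,\partial\bar{\partial}\,\varphi, \qquad \varphi(Z) \,=\, -\log\bigl(-h^{\mathbb{C}}(Z,Z)\bigr),$$
where the potential is defined on the cone $\{h^{\mathbb{C}}(Z,Z)<0\}\subset \mathbb{C}^{2n+2}\setminus\{0\}$. The function $\varphi$ is $Sp(n,1)$-invariant and transforms by a pluriharmonic function under the rescaling $Z\mapsto\lambda Z$, so $\omega_{D^n}$ descends to a closed $Sp(n,1)$-invariant real $(1,1)$-form on $D^n$. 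A short computation in affine coordinates at the base point (using equations \eqref{eq-herm-form}), with $\mathcal{H}_o$ identified with the first $n$ quaternionic blocks and $\mathcal{V}_o$ with the $w_{n+1}$-direction, shows that $\omega_{D^n}$ is positive-definite on $\mathcal{H}_o$, negative-definite on $\mathcal{V}_o$, and orthogonal across the two summands. Translating by $Sp(n,1)$ propagates these facts to all of $D^n$, and $g(X,Y) = \omega_{D^n}(X,JY)$ is then the required pseudo-K\"ahler metric, its K\"ahler property being automatic since $\omega_{D^n}$ is exact at the level of the potential.

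For part (2), both $\pi^*\alpha$ and $\omega_{D^n}^2$ are $Sp(n,1)$-invariant closed $4$-forms on $D^n$. Invariant differential forms on the reductive homogeneous space $D^n = G/H$ with $H = Sp(n)\times U(1)$ are computed by the finite-dimensional Chevalley--Eilenberg complex $(\wedge^*\mathfrak{m}^*)^H$, and its cohomology may be identified with the de Rham cohomology of the compact dual $G_c/H = Sp(n+1)/(Sp(n)\times U(1)) = \mathbb{C}P^{2n+1}$: the two CE differentials differ only by signs on brackets of the form $[\mathfrak{p},\mathfrak{p}]$, and these signs drop out after taking $H$-invariants. In particular the invariant cohomology in degree $4$ is one-dimensional, spanned by $u^2$ with $u$ generating invariant $H^2$. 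Both $[\pi^*\alpha]$ and $[\omega_{D^n}^2]$ lie on this line; I would match the two scalars by restricting to the horizontal lift of a totally geodesic $B^m\hookrightarrow H^n_\mathbb{H}$, which is an integral submanifold of $\mathcal{H}$ on which $\pi^*\alpha$ restricts to $\omega_{B^m}^2$ by the chosen normalization of $\alpha$ and on which $\omega_{D^n}^2$ restricts to $\omega_{B^m}^2$ after absorbing an overall scale into $g$. Consequently $\pi^*\alpha - \omega_{D^n}^2$ is invariant-exact, yielding the desired $Sp(n,1)$-invariant 3-form $\beta$.

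The main obstacle is the invariant-cohomology identification and the simultaneous calibration of $g$: one must verify that the Chevalley--Eilenberg differential on $(\wedge^*\mathfrak{m}^*)^H$ for the non-symmetric pair $(\mathfrak{sp}(n,1),\mathfrak{sp}(n)\oplus\mathfrak{u}(1))$ really agrees with that of its compact dual, and then arrange the scale of $g$ so that $\omega_{D^n}$ restricts to exactly $\omega_{B^m}$ (not a multiple) on the canonical twistor lift of any totally geodesic $B^m$. A secondary technical check is that this canonical lift is indeed a horizontal integral submanifold in the sense of Definition \ref{def-integral-element}, so that the vertical contribution $2\omega_{\mathcal{H}}\wedge\omega_{\mathcal{V}}$ in the expansion of $\omega_{D^n}^2$ pulls back to zero and the comparison with $\pi^*\alpha$ is direct. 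Once these two normalizations are fixed, the rest of the argument reduces to elementary manipulations inside a one-dimensional cohomology group.
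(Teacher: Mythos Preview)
Your approach is essentially the same as the paper's: construct $\omega_{D^n}$ as the curvature of the tautological bundle via $\partial\bar\partial\log(-h^{\mathbb{C}})$, then identify the $Sp(n,1)$-invariant cohomology of $D^n$ with that of the compact dual $\mathbb{C}P^{2n+1}$ to see that $H^4$ is one-dimensional, and finally fix the constant by restricting to a horizontal totally geodesic $B^m$.

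The one place your justification is shakier than the paper's is the sentence ``the two CE differentials differ only by signs on brackets of the form $[\mathfrak{p},\mathfrak{p}]$, and these signs drop out after taking $H$-invariants.'' The pair $(\mathfrak{sp}(n,1),\mathfrak{sp}(n)\oplus\mathfrak{u}(1))$ is \emph{not} symmetric, so the relative CE differential on $H$-invariants is genuinely nonzero (indeed the paper notes that invariant $2$-forms are two-dimensional while closed invariant $2$-forms are one-dimensional). The $[\mathfrak{p},\mathfrak{p}]$-component lands in the vertical piece $\mathfrak{sp}(1)/\mathfrak{u}(1)\subset\mathfrak{m}$ and contributes to $d$, so the sign change is visible and does not simply ``drop out.'' The paper sidesteps this by complexifying: both $\mathfrak{sp}(n,1)$ and $\mathfrak{sp}(n+1)$ complexify to complex symplectic algebras that are conjugate in $\mathfrak{sl}(2n+2,\mathbb{C})$ by an isomorphism fixing the subalgebra, and this conjugation gives an honest isomorphism of the complexified relative CE complexes, hence of their cohomology. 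You correctly flagged this step as the main obstacle; the complexification argument is the clean resolution.
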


\begin{proof}

To prove the first part, note that $-h^\mathbb{C}$ is a positive Hermitian metric on the tautological sub-bundle $S$ over $D^n$.  The  form $\frac{i}{2\pi}\partial\bar\partial\log(-h^\mathbb{C})$ is the pull-back of a form $\omega_{D^n}$ on $D^n$, the Chern form of $S$.  It  is a closed, $Sp(n,1)$-invariant $(1,1)$-form on $D^n$ which is easily checked to be negative on $\mathcal{V}$ and positive on $\mathcal{H}$.   (Alternatively, since the canonical bundle of $D^n$ is a positive multiple of $S$ (as $Sp(n,1)$-homogeneous bundles on $D^n$),  one can quote (4.23) of \cite{GS} for the signature of this form.)

The form $\omega_{D^n}$ is then the $(1,1)$ form of an indefinite K\"ahler metric $g$ on $D^n$ that has all the asserted properties.    This proves the first assertion.

Now we prove the second assertion.  Whenever a group $G$ acts on a space $A$,  let us write $A^G$ for the subspace of $G$-invariant elements.  If $A$ denotes the algebra of differential forms, then the cohomology of invariant forms on $D^n$, $H^*(A(D^n)^{Sp(n,1)}, d)$ (where $d$ is the usual exterior derivative) is the same as the relative Lie-algebra cohomology $H^*(\mathfrak{sp}(n,1),\mathfrak{sp}(n)\oplus\mathfrak{u}(1),\mathbb{R})$.   Similarly, the cohomology $H^*(A(\mathbb{C}P^{2n+1})^{ Sp(n+1)},d)$ of $Sp(n+1)$-invariant forms on $\mathbb{C}P^{2n+1}$ is the same as the relative Lie algebra cohomology $H^*(\mathfrak{sp}(n+1), \mathfrak{sp}(n)\oplus\mathfrak{u}(1),\mathbb{R})$.  

With our choice of definitions for $Sp(n,1)$ and $Sp(n+1)$, we get that the complexification of $\mathfrak{sp}(n,1)$ is the algebra $\mathfrak{sp}(a^\mathbb{C})$ of infinitesimal isometries of the symplectic form $a^\mathbb{C}$   of (\ref{eq-sympl-form}) on $\mathbb{C}^{2n+2}$, while the complexitication of $\mathfrak{sp}(n+1)$ is the similarly defined algebra $\mathfrak{sp}(a_0^\mathbb{C})$ of (\ref{eq-forms}).  These two algebras do not coincide, but are conjugate in $\mathfrak{sl}(2n+2,\mathbb{C})$, say by conjugating by the linear isomorphism of $\mathbb{C}^{2n+2}$ that reverses the sign of the last coordinate and is the identity on all the others.   This isomorphism is the identity on the complexified subalgebras denoted $\mathfrak{sp}(n)\oplus\mathfrak{u}(1)$, hence induces an isomorphism between the relative Lie algebra cohomologies $H^*(\mathfrak{sp}(a^\mathbb{C}),(\mathfrak{sp}(n)\oplus\mathfrak{u}(1))^\mathbb{C},\mathbb{C})$ and $H^*(\mathfrak{sp}(a_0^\mathbb{C}),(\
 mathfrak{sp}(n)\oplus\mathfrak{u}(1))^\mathbb{C},\mathbb{C})$, hence an isomorphism of the complexifications  $H^*(A(D^n)^{Sp(n,1)})\otimes\mathbb{C}$ and $H^*(A(\mathbb{C}P^{2n+1})^{Sp(n+1)})\otimes\mathbb{C}$.    By compactness, the second computes the cohomology $H^*(\mathbb{C}P^{2n+1},\mathbb{C})$.  It follows that the real vector space  $H^4(A(D^n)^{Sp(n,1)})$ is one-dimensional, hence $\pi^*\alpha = c\  \omega_{D^n}^2 + d\beta$ for some real constant $c$ and some invariant $3$-form $\beta$.  Restricting to a totally geodesic $B^n$ and recalling the normalization of $\alpha$, we see that $c = 1$, and the lemma is proved. 

\end{proof}

\begin{remark} Observe that the space $A^2(D^n)^{Sp(n,1)}$ is two-dimensional, and its subspace of closed forms is one-dimensional, spanned by the form $\omega_{D^n}$.  This can easily be checked by fixing $x\in D^n$ and using the isomorphism
$$
A^*(D^n)^{Sp(n,1)}\to \Lambda^*(T_xD^n)^{Sp(n)\times U(1)}\cong \Lambda^*(\mathcal{V}_x\oplus\mathcal{H}_x)^{Sp(n)\times U(1)},
$$
where the first map is given by restriction.  It is not hard to see that 
$$
\Lambda^2(\mathcal{V}_x\oplus\mathcal{H}_x)^{Sp(n)\times U(1)}\cong\Lambda^2(\mathcal{V}_x)^{U(1)}\oplus\Lambda^2(\mathcal{H}_x)^{Sp(n)\times U(1)},
$$
and that each summand is one-dimensional.  Since there are no invariant one-forms and the second cohomology is one-dimensional, it follows that the space of closed forms is one dimensional, and necessarily spanned by $\omega_{D^n}$ of Lemma~\ref{lem-pseudo-metric}.  Equivalently, the space of invariant Hermitian metrics on $D^n$ is two-dimensional, and the space of invariant pseudo-K\"ahler metrics is one-dimensional, consisting of non-zero multiples of the metric $g$ of Lemma~\ref{lem-pseudo-metric}.
\end{remark}

Next,  we need to describe the totally geodesic embeddings of complex hyperbolic space $B^m = H_\mathbb{C}^m$ in quaternionic hyperbolic space $H_\mathbb{H}^n$.  See \cite{Be} (for $\mathbb{H}P^n$, with similar results for $H_\mathbb{H}^n$), the thorough treatment of geodesic subspaces in \cite{CG}, or the  discussion in \cite{CT} for details.   The result is that any totally geodesic $B^m$ in $H_\mathbb{H}^n$ for $m\ge 2$ is obtained as follows:  First $m\le n$ and  there exists an $m+1$-dimensional right quaternionic subspace $V\subset \mathbb{H}^{n+1}$ so that the restriction of $Re(h^\mathbb{H})$ to $V$ has signature $(4m,4)$, and there exists a right quaternionic linear complex structure $J$ on $V$ which preserves $h^\mathbb{H}$ and so that the embedding of $B^m$ in $H_\mathbb{H}^n$ consists of those right quaternionic lines in $V$ that are invariant under $J$.   (If $m=2$ this construction produces totally geodesic $B^1$'s, but not all, because $H_\mathbb{H}^1$ is the s
 ame as real hyperbolic $4$-space and $B^1$ is the same as the real hyperbolic plane.  Real hyperbolic $4$-space has many more totally geodesic real hyperbolic planes than the ones arising in this way.)

These totally geodesic $B^m$'s in $H_\mathbb{H}^n$ have very canonical horizontal lifts to $D^n$ that we now describe.   First, consider the case $m = n$.   Given a totally geodesic $B^n\subset H_\mathbb{H}^n$, there is a right $\mathbb{H}$-linear complex structure $J$ on $\mathbb{H}^{n+1}$ preserving $h^\mathbb{H}$ so that $B^n$ consists of all the $J$-invariant lines.  Consider the isomorphism $\mathbb{H}^{n+1}\cong\mathbb{C}^{2n+2}$ as before, with $i$ acting by right quaternion multiplication.  Taking account of the Hermitian forms, it is  more accurate to use the notation $\mathbb{H}^{(n,1)}\cong\mathbb{C}^{(2n,2)}$, which we use from now on.   Let $V$ denote the $i$-eigenspace of $J$ acting on $\mathbb{C}^{(2n,2)}$.   It easy to check that $Vj$ is the $-i$-eigenspace of $J$, that we have a direct sum decomposition $\mathbb{C}^{(2n,2)} = V\oplus Vj$ into $h^\mathbb{C}$-orthogonal and $a^\mathbb{C}$-isotropic subspaces so that the restriction of $h^\mathbb{C}$ to each sub
 space has signature $(n,1)$.  Passing to complex projectivization of negative lines, we obtain two holomorphic embeddings of $B^n$ in $D^n$, each projecting (under right quaternionic projectivization) to the original geodesic embedding of $B^n$ in $H_\mathbb{H}^n$ (each projection being holomorphic with respect to each of two conjugate complex structures).  Moreover, since each of the eigenspaces $V, Vj$ is $a^\mathbb{C}$-isotropic, it follows easily that each of these embeddings of $B^n$ in $D^n$ is horizontal.  In fact, each of these two $B^n$'s is a Legendrian submanifold (integral element of maximum dimension)  of the contact structure (horizontal sub-bundle $\mathcal{H}$) of $D^n$.  

Note that each of these two embeddings is fixed by the element of $Sp(n,1)$ that is right multiplication by $i$ on $V$ and by $-i$ on $Vj$.  If follows (and we explain in more detail below)  that each is a totally geodesic submanifold of $D^n$ in the $Sp(n,1)$-invariant pseudo-K\"ahler metric $g$ defined in Lemma~\ref{lem-pseudo-metric}.  Note also that  the restriction of $\omega_{D^n}$ to each $B^n$ coincides with the K\"ahler form $\omega_{B^n}$.

Finally, if $m<n$, we use the same construction with a suitable $\mathbb{H}^{(m,1)}\subset\mathbb{H}^{(n,1)}$.  As remarked above, in case $m=1$ we do not obtain all geodesic embeddings $B^1\subset H_\mathbb{H}^n$ in this way, but we do obtain a special class of embeddings, namely the ones covered by holomorphic horizontal geodesic embeddings of $B^1$ in $D^n$.    

We now explain what we mean by {\em horizontal totally geodesic complex submanifolds } of $D^n$.   We consider a   complex manifold $(M,J)$ with a pseudo-Hermitian metric $g$  (by which
we mean a pseudo-Riemannian metric $g$ which is $J$-invariant: $g(JX,JY) = g(X,Y)$ for all tangent vectors $X,Y$) and  a metric complex connection $\nabla$ (not necessarily torsion-free), meaning that $\nabla J =0$ and $\nabla g=0$..  Let $N\subset M$ be a complex submanifold which is never isotropic, in the sense that for all $x\in N$, $T_xN$ contains no null vectors for $g$.  Then $N$ is said to be {\em totally geodesic} if $TN$ is invariant under $\nabla$.   This means that for all vector fields $X,Y$ tangent to $N$, $\nabla_XY$ is also tangent to $N$.  An equivalent formulation is in terms of the {\em second fundamental form} $\alpha(X,Y)$, defined by $\alpha(X,Y) = (\nabla_X Y)^\bot$, where $\bot$ denotes the $g$-orthogonal complement.  If $TN$ contains no null vectors, then $N$ is totally geodesic if and only if $\alpha = 0$.   Note that if $\nabla$ has torsion, then $\alpha(X,Y)$ need not be symmetric in $X$ and $Y$.

We list the main properties that we need of the above embeddings.

\begin{lemma}
\label{lem-embeddings}
\begin{enumerate}

\item Each of the embeddings $B^m\subset D^n$ just described is a component of the fixed point set in $D^n$ of an element of $Sp(n,1)$.   

\item In particular, each such embedding is totally geodesic with respect to any pair $(g,\nabla)$ of  $Sp(n,1)$-invariant Hermitian metric $g$ and metric connection $\nabla$.

\item For any pair $(g,\nabla)$ as above, every horizontal holomorphic embedding of $B^m$ in $D^n$ that is totally geodesic  in the sense of vanishing second fundamental form,  is one of the embeddings described above.

\end{enumerate}
\end{lemma}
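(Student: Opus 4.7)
The plan is to verify the three parts in order.

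For part (1), exhibit the element $\phi\in Sp(n,1)$ explicitly. Let $V_\mathbb{H}\subset\mathbb{H}^{n+1}$ and $J:V_\mathbb{H}\to V_\mathbb{H}$ be the data of the embedding (right $\mathbb{H}$-linear, $J^2=-1$, preserving $h^\mathbb{H}|_{V_\mathbb{H}}$), and extend $J$ to $\phi:\mathbb{H}^{n+1}\to\mathbb{H}^{n+1}$ by setting $\phi=\mathrm{id}$ on the $h^\mathbb{H}$-orthogonal complement $V_\mathbb{H}^{\perp}$ (which is quaternionic of dimension $n-m$). The extension is right $\mathbb{H}$-linear and $h^\mathbb{H}$-preserving, so $\phi\in Sp(n,1)$. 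Viewed as a $\mathbb{C}$-linear map on $\mathbb{C}^{2n+2}$, $\phi$ has eigenspace decomposition $V\oplus Vj\oplus V_\mathbb{H}^{\perp}$ with eigenvalues $+i,-i,+1$; its fixed complex lines in $\mathbb{C}P^{2n+1}$ form the disjoint union of the projectivizations of these three summands, and intersecting with $D^n$ kills the third (since $h^\mathbb{C}$ is positive definite on $V_\mathbb{H}^{\perp}$), leaving exactly the two horizontal lifts of $B^m$ as the two components of the fixed-point set in $D^n$. When $m=n$ one may take $\phi=J$ itself, as noted in the paper.

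For part (2), apply the standard fixed-point-set argument. Both $g$ and $\nabla$ are $Sp(n,1)$-invariant, hence $\phi$-invariant, so $\phi$ sends $\nabla$-geodesics to $\nabla$-geodesics and commutes with $\nabla$-parallel transport. Let $F$ be the component of the fixed set containing the given embedding and let $x\in F$. For $X\in T_xF=\ker(d\phi_x-I)$, the $\nabla$-geodesic $\exp_x^\nabla(tX)$ is pointwise fixed by $\phi$ and hence lies in $F$; $\phi$-equivariance of parallel transport then preserves $T_xF$ along this geodesic, so $TF$ is $\nabla$-invariant and the second fundamental form vanishes. The non-isotropy hypothesis in the definition holds because $TF\subset\mathcal{H}$ and $g$ is positive definite on $\mathcal{H}$.

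Part (3) is the substance. Suppose $\iota:B^m\to D^n$ is horizontal, holomorphic, and $\nabla$-totally geodesic; fix $x_0\in\iota(B^m)$, let $L=\pi(x_0)$, and set $W=T_{x_0}\iota(B^m)\subset\mathcal{H}_{x_0}$. Via the identifications $\mathcal{H}_{x_0}\cong\mathrm{Hom}(S_{x_0},S_{x_0}^\perp/S_{x_0})$ and $S_{x_0}^\perp/S_{x_0}\cong L^{\perp_\mathbb{H}}\cong\mathbb{H}^n$, the subspace $W$ corresponds to a complex $m$-plane $W'\subset\mathbb{H}^n$, and integrality of $W$ (Definition 1, which holds because $\iota(B^m)$ is horizontal) translates into $a^\mathbb{C}|_{W'}=0$. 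From $h^\mathbb{H}=h^\mathbb{C}+j\,a^\mathbb{C}$ and the relation $jc=\bar c j$ in $\mathbb{H}$ one extracts the identity $a^\mathbb{C}(Xj,Y)=-h^\mathbb{C}(X,Y)$; combined with isotropy of $W'$ and positive definiteness of $h^\mathbb{C}$ on $L^{\perp_\mathbb{H}}$, this forces $W'\cap W'j=0$. Hence $V':=W'\oplus W'j$ is a quaternionic $m$-dimensional subspace of $L^{\perp_\mathbb{H}}$, and $W'$ is the $+i$-eigenspace of a right $\mathbb{H}$-linear, $h^\mathbb{H}$-preserving complex structure $J'$ on $V'$.

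Reverse the construction: set $V_\mathbb{H}=L\oplus V'$ with complex structure $J_L\oplus J'$, where $J_L$ is the complex structure on $L$ corresponding to $x_0$. This is precisely the data of a standard totally geodesic embedding, and its horizontal lift $N^{\mathrm{std}}\subset D^n$ passes through $x_0$ with tangent $\mathrm{Hom}(S_{x_0},W')=W$. By part (2), $N^{\mathrm{std}}$ is $\nabla$-totally geodesic; $\iota(B^m)$ is also $\nabla$-totally geodesic through $x_0$ with the same tangent plane $W$, so the standard uniqueness of $\nabla$-totally geodesic submanifolds via the $\nabla$-exponential map gives $\iota(B^m)=N^{\mathrm{std}}$ in a neighborhood of $x_0$; real analyticity of $\nabla$-geodesics, a consequence of $Sp(n,1)$-invariance of $\nabla$ on the homogeneous space $D^n$, propagates the equality to all of $\iota(B^m)$. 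The main obstacle is the algebraic step producing the quaternionic subspace $V'$ from $W$: this is what pins down an arbitrary horizontal totally geodesic tangent plane as coming from the quaternionic-subspace construction. Once that algebraic bridge is crossed, the remaining uniqueness is routine.
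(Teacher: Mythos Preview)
Your proof is correct and follows essentially the same strategy as the paper: a fixed-point argument for parts (1)--(2), and for part (3) the observation that a totally geodesic submanifold is determined by its tangent plane at one point together with the fact that every complex integral element of $\mathcal{H}_x$ is tangent to one of the standard embeddings. The only differences are cosmetic---your element $\phi$ acts as the identity on $V_\mathbb{H}^\perp$ where the paper uses $-1$, and in part (3) you carry out explicitly the algebraic construction (isotropy of $W'$ forces $W'\cap W'j=0$, etc.) that the paper dismisses with ``it is easy to check.''
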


\begin{proof}

Fix $m\le n$.  Each of the embeddings in question is equivalent under $Sp(n,1)$ to the embedding of $B^m$ in $D^n$ resulting from the embedding 
$$
(z_1,\dots,z_m,z_{m+1})\to (z_1,\dots,z_m,0,\dots,0,z_{m+1})
$$
of $\mathbb{C}^{(m,1)}$ in $\mathbb{H}^{(n,1)}$.   We still use the quaternionic and related complex  coordinates $x_l = z_l + j w_l$ in $\mathbb{H}^{(n,1)}\cong \mathbb{C}^{(2n,2)}$.  The image of $B^m$ is a component of the fixed point set of the element

$$
(x_1,\dots,x_{n+1})\to (ix_1,\dots,ix_m,-x_{m+1},\dots,-x_n,ix_{n+1})
$$
of the isotropy subgroup $Sp(n)\times U(1)\subset Sp(n,1)$.  (There is one other component in $D^n$, the $-i$ eigenspace.  The $-1$ eigenspace is disjoint from $D^n$.)  This proves the first part of the Lemma.  

If $g$ is any invariant Hermitian metric on $D^n$, then, by the discussion in Lemma~\ref{lem-pseudo-metric}, the restriction of $g$ to $\mathcal{H}$ is definite, thus horizontal submanifolds contain no null vectors for $g$.  The above definition of totally geodesic submanifolds and characterization in terms of second fundamental form holds.    If $M$ is a component of the fixed point set of a map that preserves $g$ and $\nabla$, then no normal vector to the fixed point set is fixed, thus the second fundamental form $\alpha(X,Y)$ must be fixed, thus $\alpha = 0$, and $M$ is totally geodesic.   This reasoning applies to our embeddings and proves the second assertion of the Lemma.

Now take a pair $(g,\nabla)$ as in the second part, and a holomorphic horizontal embedding  of $B^m$ in $D^n$ with vanishing second fundamental form.  Such an embedding is uniquely determined by its tangent space at one point.  But it is easy to check that there is one of our special embeddings  tangent to each complex (meaning invariant under the complex structure $J$ of $D^n$) integral element of $\mathcal{H}_x$.  Since the tangent space of every horizontal complex submanifold of $D^n$ is a complex integral element, our construction has found all  holomorphic horizontal  embeddings that are totally geodesic in the usual differential-geometric sense.

\end{proof}

\section{Schwarz Lemma for Horizontal Holomorphic Maps}

A holomorphic map $F:B^m\to D^n$ is called {\em horizontal} if for all $x\in B^m$, $dF(T_xB^m)\subset \mathcal{H}_{F(x)}$.  It is well known that horizontal holomorphic maps satisfy a Schwarz Lemma, see \cite{ CMP, GS}  In addition to the well-known inequality, we need a discussion of the case of equality.  Since we could not find any discussion in the literature of the precise result we need,  we prove the Schwarz Lemma in some detail.   We follow the method of \cite{CMP}, which in turn follows \cite{Ko}.   Both \cite{CMP,GS} use a definite Hermitian metric on $D^n$ which is necessarily not K\"ahler, and its Chern connection.  In order to relate equality to totally geodesic maps we find it more convenient to use the pseudo-K\"ahler metric of Lemma~\ref{lem-pseudo-metric} and its Levi-Civita connection.  The two give of course equivalent results for horizontal holomorphic maps as we explain later.

Let $(g,\nabla)$ be a pair of  invariant pseudo-Hermitian metric and metric connection on $D^n$.   Assume that $g|\mathcal{H}$ is positive definite, and normalize $g$ so that its restriction to every horizontal geodesic $B^m\subset D^n$ as in the last section is the metric of constant holomorphic sectional curvature $-1$.  Moreover, assume that $\nabla^{(0,1)}$ is the $\bar \partial$-operator of the complex structure on $TD^n$.  Two examples of such a pair are:
\begin{enumerate}
\item The pseudo-K\"ahler metric of Lemma~\ref{lem-pseudo-metric}, let's denote this metric by $g^K$ and let's denote its Levi-Civita connection by $\nabla^K$.   Then $\nabla^K$ is also the Chern connection on $TD^n$ preserving $g^K$, namely $(\nabla^K)^{(0,1)} =\bar\partial$.

\item The positive definite metric used in \cite{CMP,GS} using the Killing form and the Cartan involution.  Let us denote this metric by $g^C$ and its corresponding Chern connection by $\nabla^C$.

\end{enumerate}

Then the restriction of $\nabla$ to $B^m$ is necessarily the canonical connection $\nabla$ on $B^m$, which is both the Levi-Civita connection and the Chern connection, as is the case for any K\"ahler metric.

Given such a compatible pair $(g,\nabla)$, we can define the curvature tensor $R = R^\nabla$ in the usual way, and we can define the {\em horizontal holomorphic sectional curvatures} to be the numbers $R(X,JX,X,JX)$ defined by  
$$
R(X,JX,X,JX) = g(R(X,JX)JX,X)\ \hbox{for each}\ X\in \mathcal{H}, \  g(X,X) = 1.
$$

\begin{lemma}  For any compatible pair $(g,\nabla)$ as just defined, the horizontal holomorphic sectional curvatures  are constant, identically equal to $-1$.

\end{lemma}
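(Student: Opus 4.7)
The plan is to reduce the computation of the horizontal holomorphic sectional curvature at an arbitrary point in an arbitrary horizontal direction to the restriction of $(g,\nabla)$ to a suitable totally geodesic $B^1\subset D^n$, for which the sectional curvature is prescribed by the normalization hypothesis.

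Fix $x\in D^n$ and a unit vector $X\in \mathcal{H}_x$. The complex line $\mathbb{C}X\subset \mathcal{H}_x$ is trivially an integral element in the sense of Definition~\ref{def-integral-element}, since the vertical-bracket condition $[Y,Z]^\mathcal{V}=0$ is vacuous on a one-dimensional subspace. The last paragraph of the proof of Lemma~\ref{lem-embeddings} asserts that one of the distinguished horizontal embeddings from Section~2 exists tangent to any complex integral element; applying this with $m=1$ produces a horizontal holomorphic embedding $i:B^1\hookrightarrow D^n$ with $i(0)=x$ and $di(T_0B^1)=\mathbb{C}X$. By assertion (2) of Lemma~\ref{lem-embeddings}, this $B^1$ is totally geodesic for the pair $(g,\nabla)$, i.e.\ $\nabla_YZ$ is tangent to $i(B^1)$ whenever $Y,Z$ are.

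The restriction of $\nabla$ to $i(B^1)$ is therefore a well-defined metric complex connection, and as observed just after the statement of the compatibility conditions, it must agree with the canonical Kähler connection of $B^1$. By the normalization hypothesis on $g$, the induced metric on $B^1$ has constant holomorphic sectional curvature $-1$, so the intrinsic curvature satisfies $R^{B^1}(X,JX,X,JX)=-1$. To conclude, I use the standard submanifold identity: when $\nabla$ preserves $TN$ in the sense above, for any tangent vector fields $Y,Z,W$ on $N$ each of $\nabla_Y\nabla_Z W$, $\nabla_Z\nabla_Y W$ and $\nabla_{[Y,Z]}W$ is again tangent to $N$, so the ambient $R(Y,Z)W$ equals the intrinsic $R^N(Y,Z)W$ computed from the restricted connection. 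This argument is purely definitional and in particular does not require $\nabla$ to be torsion-free, which matters because $\nabla^C$ of example (2) has torsion. Applied at $x$ to $X,JX$, it yields $R(X,JX,X,JX)=R^{B^1}(X,JX,X,JX)=-1$.

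Since $x\in D^n$ and the unit vector $X\in\mathcal{H}_x$ were arbitrary, this proves the lemma. The only substantive step is the first one—the existence of a distinguished horizontal totally geodesic $B^1$ tangent to any chosen complex direction in $\mathcal{H}_x$—and this is already built into Lemma~\ref{lem-embeddings}(3); the remainder is a direct application of the definitions, with the mild care that the Gauss-type curvature identity must be formulated without assuming torsion-freeness.
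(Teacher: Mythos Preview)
Your proof is correct and follows essentially the same route as the paper's: both reduce to the totally geodesic horizontal $B^1$ tangent to a given complex line in $\mathcal{H}_x$ and identify the ambient holomorphic sectional curvature with the intrinsic one, which is $-1$ by the normalization. Your version simply spells out what the paper calls ``the usual argument'' --- in particular the observation that the curvature identity on a totally geodesic submanifold is purely definitional and needs no torsion-freeness --- so the two differ only in level of detail, not in strategy.
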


\begin{proof}

Let $x\in D^n$.  Then any complex one-dimensional subspace of  $\mathcal{H}_x$ is the real span of $X,JX$ for some unit vector $X\in\mathcal{H}_x$.  Since this is the tangent space to a holomorphic, horizontal totally geodesic embedding of $B^1$ in $D^n$, we have, by the usual argument, that the holomorphic sectional curvature in $D^n$ is the same as in $B^1$, namely $-1$.

\end{proof}

Observe that the isotropy group at $x\in D^n$ acts transitively on the complex projective space of the space $\mathcal{H}_x$, since this action is isomorphic to the action of $Sp(n)\times U(1)$ on $\mathbb{C}P^{2n-1}$, which is indeed transitive.  This already implies that the horizontal holomorphic sectional curvatures must be constant.

\begin{lemma}
Let $(g,\nabla)$ be the compatible pair $(g^K,\nabla^K)$ of pseudo-K\"ahler metric and compatible torsion-free connection.   Let $M\subset D^n$ be a horizontal submanifold.   Let $\alpha:T_xM\otimes T_xM\to T_xM^\bot$ be its second fundamental form.   Then $\alpha(X,Y)\in \mathcal{H}_x$ for all $X,Y,\in T_xM$.

\end{lemma}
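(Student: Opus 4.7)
The plan is to apply O'Neill's formula for the vertical component of the Levi-Civita connection in a pseudo-Riemannian submersion to the twistor projection $\pi\colon D^n \to H^n_\mathbb{H}$.

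First, I would observe that $\pi$ is a pseudo-Riemannian submersion for $g = g^K$. By Lemma~\ref{lem-pseudo-metric}, $g$ is $Sp(n,1)$-invariant, negative definite on $\mathcal{V}$, positive definite on $\mathcal{H}$ (so both distributions are non-degenerate), and $\mathcal{V} \perp_g \mathcal{H}$. Since $d\pi$ identifies $\mathcal{H}_x$ with $T_{\pi(x)}H^n_\mathbb{H}$, invariance forces $g|_\mathcal{H}$ to descend to an $Sp(n,1)$-invariant metric on $H^n_\mathbb{H}$, which must coincide (up to a positive multiple) with the standard one.

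Next, I would establish O'Neill's identity: for horizontal vector fields $X, Y$ on $D^n$,
$$
(\nabla^K_X Y)^\mathcal{V} = \tfrac{1}{2}\,[X, Y]^\mathcal{V}.
$$
Torsion-freeness of $\nabla^K$ yields the antisymmetric part, $(\nabla^K_X Y - \nabla^K_Y X)^\mathcal{V} = [X, Y]^\mathcal{V}$, so it remains to show that the symmetric part $(\nabla^K_X Y + \nabla^K_Y X)^\mathcal{V}$ vanishes. Both sides are tensorial in $X, Y$ among horizontal fields, so it suffices to check this on basic horizontal lifts of fields on $H^n_\mathbb{H}$. For such $X, Y$ and any vertical $V$, basic-ness forces $[X,V]$ and $[Y,V]$ to be vertical, and $g(X, Y)$ is the pullback of a function on $H^n_\mathbb{H}$, hence constant along fibers. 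Combining these with $\nabla^K g = 0$, a short Koszul-style computation reduces $g(\nabla^K_X Y + \nabla^K_Y X, V)$ to $-V\,g(X, Y) = 0$, as required.

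Finally, extend $X, Y \in T_xM$ to horizontal vector fields near $x$. Since $T_xM$ is an integral element (Definition~\ref{def-integral-element}, together with the remark that $[X, Y]^\mathcal{V}(x)$ depends only on $X(x), Y(x)$), we have $[X, Y]^\mathcal{V}(x) = 0$, so O'Neill's identity gives $(\nabla^K_X Y)^\mathcal{V}(x) = 0$. The $g$-orthogonality of $\mathcal{V}_x$ and $\mathcal{H}_x$, together with $T_xM \subset \mathcal{H}_x$, gives the decomposition $(T_xM)^\perp = \mathcal{V}_x \oplus (\mathcal{H}_x \cap T_xM^\perp)$; vanishing of the vertical component of $\nabla^K_X Y$ therefore shows $\alpha(X, Y) \in \mathcal{H}_x$.

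The only genuinely non-trivial step is verifying O'Neill's identity in the indefinite setting, and this is the main obstacle. Since the fibers are non-degenerate and each ingredient (metric compatibility, torsion-freeness, existence of basic horizontal lifts, fiberwise constancy of the basic inner product) survives unchanged in the pseudo-Riemannian case, the classical argument goes through without modification, so no serious difficulty arises.
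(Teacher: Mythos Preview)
Your proof is correct. Both your argument and the paper's reduce to showing that $(\nabla^K_X Y)^{\mathcal{V}}=0$ whenever $X,Y$ span an integral element of $\mathcal{H}$, but they organize this differently. The paper first notes that $(X,Y)\mapsto(\nabla^K_X Y)^{\mathcal{V}}$ is tensorial on horizontal fields, observes that on an integral element torsion-freeness forces it to be symmetric in $X,Y$, asserts that $(\nabla^K_X X)^{\mathcal{V}}=0$, and then concludes by polarization. You instead identify $\pi$ as a pseudo-Riemannian submersion and establish the full O'Neill identity $(\nabla^K_X Y)^{\mathcal{V}}=\tfrac{1}{2}[X,Y]^{\mathcal{V}}$ via the Koszul formula on basic lifts, from which the conclusion is immediate on integral elements. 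Your route is slightly heavier in that it brings in the submersion framework, but it has the virtue of making explicit exactly the step the paper leaves unjustified: the diagonal vanishing $(\nabla^K_X X)^{\mathcal{V}}=0$ is an immediate special case of your O'Neill identity, whereas in the paper's presentation it is simply asserted.
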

\label{lem-horizontal}

\begin{proof}
Observe that if $X,Y$ are horizontal vector fields, then the value of the vertical component $(\nabla_X Y)^\mathcal{V}$ at $x$ depends just on the values of $X, Y$ at $x$, thus we get a well defined tensor  $\mathcal{H}_x\otimes\mathcal{H}_x\to\mathcal{V}_x$.  If $X,Y$ span an integral element of $\mathcal{H}$, namely, if $X^\mathcal{V},Y^\mathcal{V}$ and $[X,Y]^\mathcal{V}$ all vanish, see Definiton~\ref{def-integral-element}, we see that $(\nabla_X Y)^\mathcal{V}$ is symmetric in $X$ and $Y$,  since the symmetry of the connection $\nabla = \nabla^K$ gives 
$$
(\nabla_X Y)^\mathcal{V} - (\nabla_Y X)^\mathcal{V} = [X,Y]^\mathcal{V} = 0.
$$
Since $(\nabla_X X)^\mathcal{V} = 0$, we see that $(\nabla_X Y)^\mathcal{V} = 0$ on all integral elements $X,Y$.

		Suppose that $M\subset D^n$ is a horizontal submanifold and that $\alpha$ is its second fundamental form.   Then $\alpha(X,Y) = (\nabla_X Y)^\bot = \alpha(X,Y)^\mathcal{H} + \alpha(X,Y)^\mathcal{V}$, where $\alpha(X,Y)^\mathcal{V} = (\nabla_X Y)^\mathcal{V} = 0$ since $X,Y$ form an integral element of $\mathcal{H}$.   Thus $\alpha(X,Y)\in\mathcal{H}$.

\end{proof}

\begin{theorem}
\label{thm-schwarz}

Let $F:B^m\to D^n$ be a horizontal holomorphic map , and let $\omega_{B^m}, \omega_{D^n}$ be as above.  Then $F^*\omega_{D^n} \le \omega_{B^m}$.  Equality holds at every point if and only if $F$ is a horizontal holomorphic geodesic embedding of  $B^m$ in $D^n$.

\end{theorem}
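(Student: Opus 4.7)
The plan is to follow the classical Ahlfors--Schwarz--Yau argument (as in \cite{CMP}, following \cite{Ko}), adapted to the pseudo-K\"ahler setting where $g = g^K$ is indefinite but is positive definite on $\mathcal{H}$. Since $F$ is horizontal, $dF$ lands in $\mathcal{H}$, and so $F^\ast g$ is a \emph{non-negative} Hermitian form on $B^m$, pulled back from a positive definite form on $dF(T_xB^m) \subset \mathcal{H}_{F(x)}$. This is the key observation that makes the indefiniteness of $g$ harmless for the argument. To compare with $\omega_{B^m}$, I introduce at each $x \in B^m$ the largest eigenvalue $u(x)$ of $F^\ast g$ with respect to $g_{B^m}$, so that the desired inequality $F^\ast\omega_{D^n} \leq \omega_{B^m}$ is equivalent to $u \leq 1$ on $B^m$.

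The heart of the proof is a pointwise curvature inequality of Chern--Lu type for $\log u$ (or for $\log \mathrm{tr}_{\omega_{B^m}} F^\ast \omega_{D^n}$, which is more convenient because it is smooth). Differentiating twice and applying the Bochner--Chern--Lu formula, one gets schematically
\begin{equation*}
\Delta \log u \;\geq\; -\,K_{B^m}\,+\, H^{D^n}_{\mathcal{H}}(dF\cdot)\cdot u,
\end{equation*}
where $K_{B^m}=-1$ is the (constant) holomorphic sectional curvature of $B^m$ and $H^{D^n}_{\mathcal{H}}=-1$ is the horizontal holomorphic sectional curvature established in the lemma preceding Theorem~\ref{thm-schwarz}. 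Crucially, because $F$ is horizontal, the Gauss equation couples only the horizontal directions of $D^n$, so only the bound on $H^{D^n}_{\mathcal{H}}$ enters. This yields $\Delta \log u \geq u - 1$ at any critical point of $u$. Since $B^m$ is non-compact, I apply Yau's trick: replace $u$ by $u\cdot(1-|z|^2)^{\varepsilon}$, which tends to zero near the boundary, attains its maximum at an interior point $x_0$, and there satisfies $\Delta \log \leq 0$; letting $\varepsilon \to 0$ gives $u \leq 1$ everywhere, which is exactly $F^\ast\omega_{D^n} \leq \omega_{B^m}$.

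For the equality case, suppose $F^\ast\omega_{D^n} = \omega_{B^m}$ identically. Then all eigenvalues of $F^\ast g$ with respect to $g_{B^m}$ equal $1$, so $F$ is a holomorphic isometric immersion onto its image $M = F(B^m) \subset D^n$, which is a horizontal holomorphic submanifold. Equality in the Chern--Lu inequality at every point forces the second fundamental form $\alpha$ of $M$ with respect to $(g^K,\nabla^K)$ to vanish in all directions tangent to $M$, since the non-negative ``defect'' term in the Bochner formula is precisely $|\alpha|^2$. By Lemma~\ref{lem-horizontal}, $\alpha$ automatically lies in $\mathcal{H}$, so this makes $M$ a horizontal holomorphic totally geodesic submanifold of $D^n$. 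Applying Lemma~\ref{lem-embeddings}(3), $M$ must be one of the canonical embeddings described in Section~2, and $F:B^m \to M$ is a holomorphic isometry, hence a horizontal holomorphic geodesic embedding.

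The main obstacle is justifying the Gauss-equation step cleanly in the pseudo-K\"ahler setting and tracking the equality discussion through it; once one recognizes that $dF$ lands in the positive definite subbundle $\mathcal{H}$ and that the relevant curvature hypothesis is on horizontal holomorphic sectional curvature, this reduces to a routine computation equivalent to the one carried out for $(g^C,\nabla^C)$ in \cite{CMP,GS}. A secondary technical point is the non-compactness of $B^m$, handled by Yau's exhaustion argument as above. The final identification of $F$ as one of the standard embeddings is immediate from Lemma~\ref{lem-embeddings}, which is why that lemma was set up in the previous section.
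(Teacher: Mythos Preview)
Your overall strategy matches the paper's: an Ahlfors--Schwarz argument exploiting that the horizontal holomorphic sectional curvatures of $D^n$ equal $-1$, followed in the equality case by the Gauss equation, Lemma~\ref{lem-horizontal} to place $\alpha$ in $\mathcal{H}$ (where $g$ is definite, forcing $\alpha=0$), and Lemma~\ref{lem-embeddings}(3) to identify the image. The equality discussion you give is essentially identical to the paper's.

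The paper organizes the inequality differently, and the difference is not merely cosmetic. It first reduces to $m=1$ by restricting $F$ to each geodesic $B^1\subset B^m$; there $F^*\omega_{D^n}=u\,\omega_{B^1}$ for a genuinely smooth scalar $u$, and Kobayashi's disc-rescaling handles non-compactness in place of your Yau exhaustion. This reduction sidesteps two issues in your direct approach. First, your largest-eigenvalue function $u$ is generally only Lipschitz, so $\Delta\log u$ is not classically defined; you acknowledge this and propose switching to the trace, but the trace version of the Chern--Lu inequality needs an upper bound on the horizontal holomorphic \emph{bisectional} curvature of $D^n$, whereas the preceding lemma only establishes that the horizontal holomorphic \emph{sectional} curvature is $-1$. (The bisectional bound does hold here, but it is not proved in the paper, and Royden-type arguments using only sectional curvature introduce a dimensional constant and lose sharpness.) Restricting to geodesic discs makes $u$ a smooth scalar and collapses bisectional to sectional, resolving both points at once; you should either carry out this reduction or supply the bisectional curvature bound.
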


\begin{proof}

We follow the method explained in \cite{CMP, Ko}.   First we treat the case $m =1$, to which the general case is easly reduced.   

Suppose $F:B^1\to D^n$ is a horizontal holomorphic map.  Then for all $x\in B^1$, $dF(T_xB^1)\subset \mathcal{H}_{F(x)}$, on which the metric $g = g^K$ is positive definite and on which the holomorphic sectional curvature is $-1$.   If we write $F^*\omega_{D^n} = u\  \omega_{B^1}$ for a non-negative smooth function $u$ on $B^1$, then, by the method of \S 2 of Chapter I and \S2 of Chapter III  of \cite{Ko}, see also \S13.4 of \cite{CMP},  we may assume that $u$ attains its maximum, and a computation of its Laplacian at the maximum gives the inequality $u\le 1$, hence the desired inequality $F^*\omega_{D^n}\le \omega_{B^1}$

If equality holds at every point, then $F$ is an isometric immersion.  Locally $F$ is an isometric embedding.  Let $M\subset D^n$ be the image of such a local embedding, and let $\alpha$ be its second fundamental form.    Since the metric $g$ is pseudo-K\"ahler, the usual K\"ahler identities hold, and we get the formula, for any unit vector $X\in T_xM$, 
$$
R^M(X,JX,X,JX) = R^{D^n}(X,JX,X,JX) - 2g(\alpha(X,X),\alpha(X,X)),
$$
see Proposition 9.2 of Chapter IX of \cite{KN}.  
Since both holomorphic sectional curvatures are equal (to $-1$), we must have $g(\alpha(X,X),\alpha(X,X)) =0$.   Since, by Lemma~\ref{lem-horizontal},  $\alpha(X,X)\in \mathcal{H}$, on which $g$ is positive definite, it follows that $\alpha(X,X) = 0$, hence $M$ is a horizontal complex  totally geodesic submanifold, thus, by Lemma~\ref{lem-embeddings}, it must be one of our special embeddings.  This proves the case $m = 1$.

To prove the case $m>1$, take any $x\in B^m$ and any complex direction at $x$ and take the geodesic $B^1\subset B^m$ through $x$ in this direction..  Restricting $F$ to this $B^1$ we get the desired inequality.  If equality holds for all $x$ and all directions, then every geodesic $B^1\subset B^m$ must be geodesically isometrically embedded in $D^n$, from which we get the vanishing of the second fundamental form of the embedding of $B^m$, hence the latter is  a horizontal holomorphic totally geodesic embedding, hence as described in Lemma~\ref{lem-embeddings}.

\end{proof}

\begin{remark}
A parallel proof, with a somewhat different discussion in the case of equality, would hold using the definite Hermitian (non-K\"ahler) metric mentioned above.
\end{remark}

\section{Proof of Theorem~\ref{thm-main}}

Let $X = \Gamma\backslash B^m$ be a compact manifold and let $\rho:\Gamma\to Sp(n,1)$ be a representation.  If $c(\rho) = 0$ there is nothing to prove.   If $c(\rho) \ne 0$, then $\rho$ is a reductive representation, in fact, its image is Zariski-dense in $Sp(n,1)$.   The reason is that otherwise its image lies in a parabolic subgroup, and in this case this makes $c(\rho) = 0$, see the similar argument in the last paragraph of the proof of Theorem 6.1 of  \cite{Co}. Then,  by Corlette's existence theorem for harmonic metrics (equivalently, equivariant harmonic maps), see \S 3 of  \cite{Co},  (see also \cite{Do}), there is an equivariant harmonic map $f:B^m\to H_\mathbb{H}^n$.  Moreover, since $f^*\alpha \ne 0$, the rank of $f$ is at least four.  By Theorem 6.1 of \cite{CT} there is an equivariant horizontal holomorphic map $F:B^m\to D^n$ that lifts $f$, that is, $f = \pi\circ F$. Since the form $\beta$ of Lemma~\ref{lem-pseudo-metric} descends to a form on $X$, we get  that, 
 on $X$,  $f^*\alpha$ is cohomologous to $F^*\omega_{D^n}^2$.  (We do not distinguish between  $\Gamma$-invariant forms on $B^m$ and forms on $X$, in particular we write either $\omega_X$ or $\omega_{B^m}$ as convenient).

Thus we get 
$$
c(\rho) = \int_X \ F^*\omega_{D^n}^2\wedge \omega_X^{m-2}.
$$
Since $F^*\omega_{D^n}\le \omega_X$, we get the inequality.   If equality holds, then we must have a pointwise  equality $F^*\omega_{D^n}\wedge\omega_X^{n-2}  = \omega_X^n$.  This in turn implies  a pointwise equality $F^*\omega_{D^n} = \omega_X$.  This standard fact can be seen, for example, from the easily verified formula
$$
 F^*\omega_{D^n}^2\wedge \omega_X^{n-2} = \frac{2}{n(n-1)} Tr(\Lambda^2\phi)\  \omega_X^n,  
 $$
where, for each $x\in B^m$,  $\phi = (d_xF)^*(d_xF):T_xB^m\to T_xB^m$ and $(d_xF)^*$ denotes the Hermitian adjoint of $d_xF:T_xB^m\to \mathcal{H}_{F(x)}$.   Since the eigenvalues of $\phi$ are non-negative and (by the inequality part of the Schwarz lemma) at most one, equality holds if and only if all the eigenvalues are one, which is equivalent to $F^*\omega_{D^n} = \omega_{B^m}$.   Thus, by the equality part of Theorem~\ref{thm-schwarz}, 
 $F$ is a totally geodesic horizontal holomorphic embedding, hence $f$ is a totally geodesic isometric embedding, hence $\rho$ is a totally geodesic representation.


\begin{thebibliography}{99}

\bibitem{Be} A. Besse, Manifolds all of whose Geodesics are closed, Springer-Verlag, 1978.

\bibitem{BCG}  G. Besson, G. Courtois, S. Gallot, In\'egalit\'es de Milnor-Wood g\'eom\'etriques,    Comment. Math. Helv. {\bf  82}  (2007),   753--803. 

\bibitem{BGG:2003}
{S. B. Bradlow, O. Garc\'{\i}a-Prada, P. B. Gothen}, Surface
group representations and $U(p,q)$-Higgs bundles, J. Diff. Geom.
\textbf{64} (2003), 111--170.

\bibitem{BGG:2005}
{S. B. Bradlow, O. Garc\'{\i}a-Prada, P. B. Gothen}, Maximal
surface group representations in isometry groups of classical
Hermitian symmetric spaces, Geometriae Dedicata \textbf{122} (2006), 185--213.
\bibitem{BG}  M. Bucher, T. Gelander,  Milnor-Wood inequalities for manifolds locally isometric to a product of hyperbolic planes.  C. R. Math. Acad. Sci. Paris  {\bf 346}  (2008),  no. 11-12, 661--666.

\bibitem{BIW} M. Burger, A. Iozzi, A. Wienhard, Surface group representations with maximal 
Toledo invariant, Annals of Math. {\bf 172} (2010), 517--566.

\bibitem{CMP} J. Carlson, S. M\"uller-Stach, C. Peters, Period Mappings and Period Domains,  Cambridge University Press, 2003.

\bibitem{CT}  J. Carlson and D. Toledo, Harmonic mappings of K\"ahler manifolds to locally symmetric spaces, Pub. Maths. IHES {\bf 69}, (1989) 173--201.

\bibitem{CG}  S. S. Chen, L.  Greenberg, Hyperbolic Spaces, in Contributions to Analysis pp. 49--87 (A collection of papers dedicated to Lipman Bers), L. V. Ahlfors, I. Kra, B. Maskit, L. Niremberg (Eds), Academic Press, NY, 1974.

\bibitem{Co}  K. Corlette, Flat G-bundles with canonical metrics, Jour Diff Geom. {\bf 28} (1988), 361--382.

\bibitem{Do} S. K. Donaldson, Twisted harmonic maps and the self-duality equations.  Proc. London Math. Soc. (3)  {\bf 55}  (1987),  127--131.

\bibitem{GGM:2008}
{O. Garc\'{\i}a-Prada, P. B. Gothen, I. Mundet i Riera}, Higgs bundles and surface group representations  in the real symplectic group, 
Preprint arXiv:0809.0576v3.



\bibitem{GS} P. Griffiths and W. Schmid, Locally homogeneous complex manifolds, Acta Math. {\bf 123} (1969), 253--302.

\bibitem{KP} I. Kim, P. Pansu,  Local rigidity in quaternionic hyperbolic space,   J. Eur. Math. Soc. (JEMS)  {\bf 11 }  (2009),  1141--1164;  arXiv:0708.2182.

\bibitem{KPK} I. Kim, P. Pansu, B. Klingler, Local quaternionic rigidity for complex hyperbolic lattices, arXiv:0903.3706.

\bibitem{Kl}  B. Klingler, Non-abelian Hodge theory and local rigidity for complex hyperbolic lattices,  Preprint, 2009.

\bibitem{Ko}  S. Kobayashi, Hyperbolic Manifolds and Holomorphic Mappings, Marcel Dekker, NY, 1970.

\bibitem{KN} S. Kobayashi and K. Nomizu, Foundations of Differential Geometry, Volume II, Interscience Publishers, John Wiley and Sons, 1969.

\bibitem{KM1} V. Koziarz and J. Maubon, Representations of complex hyperbolic lattices into rank 2 classical Lie groups of Hermitian type,  Geom. Dedicata  {\bf 137}  (2008), 85--111;  arXiv:math/0703174

\bibitem{KM2}  V. Koziarz and J. Maubon, The Toledo invariant on smooth varieties of general type, arXiv:0810.4805



\bibitem{Mi} J. Milnor,  On the existence of a connection with curvature zero,  Comment. Math. Helv.  {\bf 32}  (1958),  215--223.

\bibitem{To1} D.  Toledo, Representations of surface groups in complex hyperbolic space, Jour. Diff. Geom.  {\bf 29} (1989), 125--133.

\bibitem{To2} D. Toledo, Maps between complex hyperbolic surfaces, Geom. Dedicata {\bf 97} (2003), 115--128.

\bibitem{Wo} J. C. Wood, Bundles with totally disconnected structure group,   Comment. Math. Helv.  {\bf 46}  (1971), 257--273.



\end{thebibliography}
\end{document}